\newtheorem{theorem}{Theorem}[section]
\newtheorem{proposition}[theorem]{Proposition}
\newtheorem{lemma}[theorem]{Lemma}
\newtheorem{corollary}[theorem]{Corollary}
\newtheorem{observation}[theorem]{Observation}
\newtheorem{question}[theorem]{Question}
\newtheorem*{Theorem A}{Theorem A}
\newtheorem*{Theorem B}{Theorem B}
\theoremstyle{definition}
\newtheorem{DEF}[theorem]{Definition}
\newtheorem{remark}[theorem]{Remark}
\newtheorem{example}[theorem]{Example}
\newcommand{\ul}[1]{\underline{#1}}
\newcommand{\N}{\mathbb{N}}
\newcommand{\Z}{\mathbb{Z}}
\newcommand{\Q}{\mathbb{Q}}
\newcommand{\frakA}{\mathfrak{A}}
\newcommand{\frakF}{\mathfrak{F}}
\newcommand{\calA}{\mathcal{A}}
\newcommand{\calB}{\mathcal{B}}
\newcommand{\calF}{\mathcal{F}}
\newcommand{\calM}{\mathcal{M}}
\newcommand{\calO}{\mathcal{O}}
\renewcommand{\:}{\colon}
\DeclareMathOperator{\Ab}{\mathfrak{Ab}}
\DeclareMathOperator{\Mod}{Mod}
\newcommand{\ModR}{\mathop{{\operator@font Mod}\text{-}R}}
\newcommand{\ModG}{\mathop{{\operator@font Mod}\text{-}G}}
\newcommand{\RMod}{\mathop{R\text{-}{\operator@font Mod}}}
\newcommand{\GMod}{\mathop{G\text{-}{\operator@font Mod}}}
\DeclareMathOperator{\Tor}{Tor}
\DeclareMathOperator{\Ext}{Ext}
\DeclareMathOperator{\mor}{mor}
\DeclareMathOperator{\Hom}{Hom}
\renewcommand{\Im}{\operatorname{Im}}
\DeclareMathOperator{\coker}{coker}
\newcommand{\OC}[2]{\mathop{\calO_{#2}#1}\nolimits}
\newcommand{\OFG}{\OC{G}{\frakF}}
\newcommand{\OFGMod}{\mathop{\calO_{\frakF}G\text{-}{\Mod}}}
\newcommand{\CRMod}{{{\OFG}\text{-}{\Mod_R}}}
\newcommand{\RModC}{\mathop{{\Mod_R\text{-}}{\OFG}}}
\newcommand{\Fall}{\frakF_{\operator@font all}}
\DeclareMathOperator{\fin}{fin}
\newcommand{\Ffin}{\frakF_{\fin}}
\DeclareMathOperator{\vc}{vc}
\newcommand{\Fvc}{\frakF_{\vc}}
\newcommand{\Fic}{\frakF_{\operator@font ic}}
\newcommand{\Ffg}{\frakF_{\operator@font fg}}
\newcommand{\Fpc}{\frakF_{\operator@font pc}}
\newcommand{\Fab}{\frakF_{\operator@font ab}}
\newcommand{\Fvpc}{\frakF_{\operator@font vpc}}
\newcommand{\Fvab}{\frakF_{\operator@font vab}}
\DeclareMathOperator{\FP}{FP}
\DeclareMathOperator{\cd}{cd}
\DeclareMathOperator{\hd}{hd}
\DeclareMathOperator{\pd}{pd}
\DeclareMathOperator{\fld}{fld}
\DeclareMathOperator{\ucd}{\ul{cd}}
\DeclareMathOperator{\uhd}{\ul{hd}}
\newcommand{\onto}{\twoheadrightarrow}
\newcommand{\isom}{\cong}
\DeclareMathOperator{\id}{id}
\newcommand{\member}{\mathbin{\in_{\mathrm{m}}}}
\newcounter{mynotecounter}
\newenvironment{mynote}{\color{red}\begin{sf}\noindent%
\refstepcounter{mynotecounter}%
\textbf{Note~\arabic{mynotecounter}:}}{\end{sf}}
\begin{document}

\title[Bredon (co)homological dimension]{A note on the Mittag--Leffler condition for Bredon-modules }


\author{Martin G. Fluch}
\address{MATHEMA Software GmbH, Henkestra{\ss}e 91, 91052 Erlangen, Germany}
\email{martin.fluch@gmail.com}

\author{Giovanni Gandini}
\address{K{\o}benhavns Universitet, Institut for Matematiske Fag, Universitetsparken 5, 2100 K{\o}benhavn {\O}, Denmark}
\email{ggandini@math.ku.dk}

\author{Brita  Nucinkis}
\address{ Department of Mathematics,
Royal Holloway, University of London, 
Egham, TW20 0EX }
\email{Brita.Nucinkis@rhul.ac.uk}


\subjclass[2010]{Primary 20F65, 	18G60}
\date{\today}



\begin{abstract}
In this note we show the Bredon-analogue of a result by Emmanouil and Talelli, which gives a criterion when the homological and cohomological dimensions of a countable group~$G$ agree. We also present some applications to groups of Bredon-homological dimension~$1$.
\end{abstract}

\maketitle

\section{introduction}

\noindent It is a well-known fact that for a countable group, the homological dimension and the  cohomological dimension over a non-zero commutative ring $R$ differ by at most one, and that in general the homological dimension is always less than or equal to the cohomological dimension, see, for example \cite[Theorem 4.6]{bieribook}. Emmanouil and Talelli \cite[Theorem 2.1]{emmanouil-12} give a criterion  for a group of finite integral homological dimension to have both quantities  equal. The main ingredient for this result is the following:

\begin{theorem}\label{main-et}\cite[Theorem 1.3]{emmanouil-12}
Let $R$ be a countable ring and $M$ be a countably generated flat left $R$-module. Then the following conditions are equivalent:
\begin{enumerate}
\item $M$ is projective.
\item $\Ext_R^1(M,R)=0$.
\item $\Ext_R^1(M,R)$ is a countable group.
\end{enumerate}
\end{theorem}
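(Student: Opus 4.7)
The plan is as follows. The implications (i) $\Rightarrow$ (ii) $\Rightarrow$ (iii) are immediate, so the real content of the theorem is the implication (iii) $\Rightarrow$ (i). My first step would be to invoke Lazard's theorem to write the flat module $M$ as a filtered colimit of finitely generated free left $R$-modules. Since $M$ is countably generated, a cofinality argument arranges this as a tower $F_0 \to F_1 \to F_2 \to \cdots$ of finitely generated free modules with $M = \colim_n F_n$.

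The telescope presentation of this colimit yields a short exact sequence
\[
0 \to \bigoplus_n F_n \xrightarrow{\ 1-\sigma\ } \bigoplus_n F_n \to M \to 0,
\]
where $\sigma$ is assembled from the transition maps. Applying $\Hom_R(-,R)$ and using that $\bigoplus_n F_n$ is free gives the identification
\[
\Ext^1_R(M,R) \cong {\varprojlim}^1 \Hom_R(F_n, R).
\]
Countability of $R$, together with the finite generation of each $F_n$, ensures that every $\Hom_R(F_n, R)$ is a countable abelian group, so on the right-hand side one has a $\varprojlim^1$ of a tower of countable abelian groups.

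At this point I would appeal to the theorem of Gray, refined by Emmanouil, asserting that $\varprojlim^1$ of a tower of countable abelian groups is either trivial or uncountable. Under assumption (iii), countability of $\Ext^1_R(M,R)$ forces this $\varprojlim^1$ to vanish; moreover the dichotomy yields the Mittag-Leffler condition for the inverse system $\{\Hom_R(F_n, R)\}$, not merely the vanishing of $\varprojlim^1$.

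The concluding step is to translate this dual Mittag-Leffler property into the Mittag-Leffler property of the direct system $\{F_n\}$ representing $M$, in the sense of Raynaud-Gruson, and then to invoke their classical theorem that a countably generated flat Mittag-Leffler module is projective. The main obstacle lies precisely in this final chain of implications: one needs both the Gray-Emmanouil dichotomy for $\varprojlim^1$ of countable towers and the Raynaud-Gruson characterisation of countably generated flat modules, and the cleanest strategy is to treat both as black boxes, reducing the proof to verifying that the Mittag-Leffler condition on $\{\Hom_R(F_n, R)\}$ really is the one required to apply Raynaud-Gruson to $M$.
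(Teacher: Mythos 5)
Your proposal is correct and follows essentially the same route the paper takes: the theorem itself is quoted from Emmanouil--Talelli, but the paper's proof of its Bredon generalisation (Proposition \ref{propA}) proceeds exactly as you describe --- Lazard's theorem to present $M$ as a tower of finitely generated frees, the identification $\Ext^1_R(M,R)\cong\varprojlim^1\Hom_R(F_n,R)$ via the telescope, Gray's dichotomy for $\varprojlim^1$ of towers of countable abelian groups to extract the Mittag--Leffler condition on the dual tower, and finally the Raynaud--Gruson criterion (the paper's Proposition \ref{prop:raynaud-71}) to conclude projectivity. The final step you flag as the main obstacle is precisely what Proposition \ref{prop:raynaud-71} supplies: projectivity of $\varinjlim L_\alpha$ is equivalent to the Mittag--Leffler condition on the tower of duals $L_\alpha^*$, so no separate translation back to the direct system is needed.
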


In this paper we will verify a  generalisation of this theorem to the setting of Bredon-cohomology. Here the group $G$ is replaced with the orbit category ${\OFG}$, whose
objects are the transitive $G$-sets $G/H$ with $H\in \frakF$ and whose
morphisms are $G$-maps. The category $\OFGMod$ of right Bredon modules is the
functor category whose objects are contravariant functors $M\: {\OFG}\to \Ab$
from the orbit category to the category $\Ab$ of abelian groups. As a functor
category  it is, for example,  an abelian category which satisfies the same
Grothendieck Axioms as the category $\Ab,$ and which has enough projective and 
enough injective objects. Hence there is a natural setting to do homological algebra.
In the next section we briefly introduce those notions from Bredon-cohomology needed here. We refer the reader to~\cite{luck-89, mislin-03} and also 
to the PhD thesis of the first author~\cite{fluch-phdthesis} for more detail. We prove:

\begin{Theorem A}
Let $G$ be a countable group, $\frakF$ a family of subgroups of $G$ of finite 
conjugacy type and let~$\calF$ be a complete set of representatives of the 
conjugacy classes of the elements of~$\frakF$. If $n := \hd_\frakF G$ is 
finite, then the following statements are equivalent:

\begin{enumerate}
\item $\cd_\frakF G = n $

\medskip

\item $H^{n+1}_\frakF \Bigl(G; \prod\limits_{H\in \calF} \Z[-,
G/H]_G\Bigr) = 0$

\item $H^{n+1}_\frakF \Bigl(G; \prod\limits_{H\in \calF} \Z[-,
G/H]_G\Bigr)$ is countable.
\end{enumerate}
\end{Theorem A}

Note that for $\frakF=\{1\}$ this is
\cite[Theorem 2.1]{emmanouil-12}.

The main ingredients needed in the proof are the facts that a flat contravariant $\OFG$-module is a colimit of finitely generated free $\OFG$-modules \cite[Theorem~3.2]{nucinkis-04}, and that certain limits of duals of free $\OFG$-modules satisfy the Mittag--Leffler condition for Bredon-modules. This is defined analogously to the case of modules over a commutative ring, and we shall give precise definitions and properties in Section \ref{ML-section}.

We will apply Theorem A to groups of Bredon-homological dimension $1$. It is a well-known conjecture, attributed to R.~Bieri, that a finitely generated group of integral homological dimension $1$ is free. In \cite{bridson-12} Bridson and Kropholler remark that  a naive generalisation to rational homological dimension is not true as the case of the lamplighter group shows. In Section \ref{Bredonhom-1} we ask a related  question for Bredon homology, where we replace the condition that the group is finitely generated by the property Bredon-$\FP_1$. We shall answer the question in some specific situations.

Finally, we  apply Theorem \ref{main-et} to the rational cohomology of the group and will show a virtual version of \cite[Proposition 3.2]{emmanouil-12}.

\subsection*{Acknowledgements}

The authors would like to thank the referee for many helpful comments and for spotting an error in an earlier version. The first author gratefully acknowledges the support through the SFB~701 in Bielefeld. The second author acknowledges the support by the Danish National Research Foundation (DNFS) through the Centre for Symmetry and Deformation.

\section{Preliminaries on Bredon cohomology}

Let $G$ be a group and let $\frakF$ be a family of subgroups closed under taking subgroups and under conjugation. ${\OFG}$ denotes the  category with elements the transitive $G$-sets with stabilisers in $\frakF$ and morphisms, $\mor_{\frakF}(x,y,)$ given by the set of $G$-maps between them.
We now form the free abelian group on the set of morphisms, which is denoted as follows:
$$\Z\mor_{\frakF}(x,y) =\Z[x,y]_{\frakF}.$$

Let $R$ denote a commutative ring with $1$.
The category of covariant ${\OFG}$-modules, denoted $\RModC$, is now defined to be the category of covariant additive functors from ${\OFG}$ to the category of left $R$-modules. Analogously we define the category of contravariant ${\OFG}$-modules, denoted $\CRMod$. If the variance is clear from the context, or a statement is valid for either category, we simply talk of ${\OFG}$-modules.

The category of ${\OFG}$-modules inherits all of Grothendieck's axioms for an abelian category that are satisfied by the category of $R$-modules. Short exact sequences are evaluated point-wise, so are limits and colimits. We have the usual categorical tensor-product: let $M\in \RModC$ and $N \in \CRMod$, then the tensor product is denoted by 
$$M \otimes_{\frakF} N.$$

We can form contravariant and covariant ${\OFG}$-modules $R[-,y]_{\frakF}$ and $R[x,-]_{\frakF}$ respectively:
$$R[-,y]_{\frakF}(x) = R\otimes_\Z \Z[x,y]_{\frakF}\phantom{.}$$
and 
$$R[x,-]_{\frakF}(y) = R\otimes_\Z \Z[x,y]_{\frakF}.$$

\subsection{Free ${\OFG}$-modules} We have the usual notion of free, projective and flat modules. In particular, the modules $R[-,y]_{\frakF}$ and $R[x,-]_{\frakF}$ are free, and any free is a direct sum of modules of this form. For detail the reader is referred to \cite{luck-89} for the general set-up, and to \cite{fluch-phdthesis} and \cite{nucinkis-04} for this specific case.

Cohomology and Homology functors $\Ext_{\frakF}^*(M,N)$ and $\Tor_*^{\frakF}(M,N)$ are now defined in the usual manner.  
We say a free $\OFG$-module $F$ is finitely generated, or countably generated, if there is a $G$-finite, respectively $G$-countable $G$-set $\Delta$ with finite stabilisers, such that~$F \cong \Z[-,\Delta]_\frakF$.  For detail see \cite{luck-89, kropholler-09}.  We say an $\OFG$-module is finitely generated, countably generated if there is a finitely generated, respectively countably generated free module mapping onto it.

\subsection{Categories of finite type}

We say the category ${\OFG}$  is of \emph{finite type}, if there are finitely many objects $X=\{x_1,\ldots ,x_n\}$ such that for every object $y \in {\OFG}$ there exists $x_i \in X$ such that $\mor_{\OFG}(y,x_i) \neq 0$.
We say ${\OFG}$ is of \emph{finite isomorphism type} if there are finitely many objects $X=\{x_1,\ldots ,x_n\}$ and for each $y \in {\OFG}$ there is an isomorphism $\varphi \in \mor_{\OFG}(y,x_i)$ for some $x_i \in X$. This is equivalent to saying that $\OFG$ is of finite \emph{conjugacy type}, i.e.~that there are finitely many conjugacy classes of subgroups in $\frakF$.

\begin{remark} 
For $\frakF = \Ffin$ the family of finite subgroups, finite type and finite isomorphism type are the same and are equivalent to saying that the group is of type Bredon-$\FP_0$, i.e.~that the group has finitely many conjugacy classes of finite subgroups.

For $\frakF=\Fvc$ the family of virtually cyclic subgroups, on the other hand, Bredon-$\FP_0$ is equivalent to being of finite type \cite[Lemma 2.3]{desiconchabrita-11}, and it was shown in \cite{groveswilson} that an elementary amenable group of type Bredon-$\FP_0$ is necessarily virtually cyclic. Finite isomorphism type, on the other hand, is a much stronger condition. For example, the famous  of  construction Higmann--Neumann--Neumann \cite[6.4.6]{robinson}, is a group of finite type; in fact $\OFG$ has an initial object, but one can construct examples where the group is not of finite isomorphism type, see also \cite[Remark 2.1]{desiconchabrita-11}.

\end{remark}

\subsection{Dualising ${\OFG}$-modules}
To generalise the usual notion of a dual module over a ring  to this
setting one needs to observe that ${\OFG}$-modules can be described as modules 
over rings over several objects in the sense of~\cite{street-95}. 

We replace the ring $R$ by a bi-functor
$$R{\OFG}\: {\OFG}^{\rm opp} \otimes {\OFG} \to \RMod,$$
given by $R{\OFG}(x,y)=R[x,y]_{\frakF}$. In \cite{street-95} this functor is denoted $\mathcal{H}_{\OFG}$.

\begin{DEF}
The \emph{dual module $M^*$} of a contravariant ${\OFG}$-module $M$ is defined as
\begin{equation*}
M^* := \Hom_{\OFG}(M, R{\OFG}),
\end{equation*}
which is a covariant ${\OFG}$-module. If $M$ is a covariant ${\OFG}$-module, 
then its \emph{dual module~$M^*$} is defined in the same way, which yields a 
contravariant ${\OFG}$-module.
\end{DEF}

In particular this  defines additive contravariant functors
\begin{align*}
(-)^*\: & \CRMod \to \RModC
\\
\intertext{and}
(-)^*\: & \RModC \to \CRMod
\end{align*}
which are exact. In particular injective morphisms are mapped to surjective 
ones and vice versa.

\begin{observation}
Using Yoneda type arguments analogously to those in \cite{mislin-03, nucinkis-04} it follows directly that for 
any $x\in {\OFG}$ the module dual to $M= R[x, -]_{\frakF}$ is $M^* = R[-,x]_{\frakF}$ and that the dual module of~$M = R[-, x]_{\frakF}$ is $M^*= R[x,-]_{\frakF}$. 
\end{observation}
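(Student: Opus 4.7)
The plan is to invoke the Yoneda lemma in the functor categories $\OFGMod$ and $\ModOFG$. Note that the bi-functor $R\OFG$ has the property that fixing the second variable to $y$ gives the contravariant representable $R\OFG(-,y)=R[-,y]_\frakF$, while fixing the first variable to $x$ gives the covariant representable $R\OFG(x,-)=R[x,-]_\frakF$. The definition of $M^*$ amounts to taking $\Hom$ into $R\OFG$ in the variable in which $M$ itself lives, leaving the other variable of $R\OFG$ free to produce the dual functor.

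For the covariant case, let $M=R[x,-]_\frakF=R\OFG(x,-)$. Then for each object $y$ of $\OFG$, the Yoneda lemma in $\OFGMod$ yields a natural isomorphism
$$M^*(y)=\Hom_{\OFG}\bigl(R\OFG(x,-),\,R\OFG(y,-)\bigr)\cong R\OFG(y,x)=R[-,x]_\frakF(y).$$
Naturality in $y$ assembles this into an isomorphism of contravariant $\OFG$-modules $M^*\cong R[-,x]_\frakF$. Symmetrically, if $M=R[-,x]_\frakF=R\OFG(-,x)$ is contravariant, then by Yoneda in $\ModOFG$,
$$M^*(y)=\Hom_{\OFG}\bigl(R\OFG(-,x),\,R\OFG(-,y)\bigr)\cong R\OFG(x,y)=R[x,-]_\frakF(y),$$
naturally in $y$, giving $M^*\cong R[x,-]_\frakF$ as a covariant $\OFG$-module.

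The substantive content is just the Yoneda lemma. The only subtlety is bookkeeping: one must keep careful track of which slot of the bi-functor $R\OFG$ is playing the role of "target" inside $\Hom_{\OFG}$ and which slot is left free, and one must check that the isomorphisms obtained pointwise in $y$ are compatible with morphisms of $\OFG$, which follows from the naturality of the Yoneda isomorphism in the representing object. There is no genuine obstacle; the argument is expected to mirror exactly the ones appearing in \cite{mislin-03} and \cite{nucinkis-04} for the related settings of the group ring and of the Bredon ring over a family of finite subgroups.
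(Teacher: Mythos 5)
Your proposal is correct and matches the paper's approach: the paper offers no written proof beyond invoking ``Yoneda type arguments'' from the cited references, and your spelled-out application of the (linearized) Yoneda lemma in each variance, with the variance bookkeeping for the bi-functor $R{\OFG}$, is exactly what is intended. The pointwise identifications $\Hom_{\OFG}(R{\OFG}(x,-),R{\OFG}(y,-))\cong R{\OFG}(y,x)$ and $\Hom_{\OFG}(R{\OFG}(-,x),R{\OFG}(-,y))\cong R{\OFG}(x,y)$, natural in $y$, are precisely the content of the observation.
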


\begin{proposition}
\label{prop:dualising-bredon-modules}
Let $P$ be a finitely generated projective covariant ${\OFG}$-module.
\begin{enumerate}
\item $P^*$ is a finitely generated projective contravariant ${\OFG}$-module.

\item For any covariant ${\OFG}$-module $M$, there is a natural isomorphism
\begin{equation*}
\varphi\: P^*\otimes_{\OFG} M \to \Hom_{\OFG}(P,M)
\end{equation*}
of abelian groups.

\item For any contravariant ${\OFG}$-module $M$, there is a natural isomorphism
\begin{equation*}
\varphi'\: M\otimes_{\OFG} P \to \Hom_{\OFG}(P^*,M) \,.
\end{equation*}

\item There is a natural isomorphism
\begin{equation*}
 \varphi''\: P\to P^{**}. 
\end{equation*}
\end{enumerate}
\end{proposition}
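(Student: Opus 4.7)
My plan is to prove all four parts simultaneously by reducing to the case of generators $P = R[x,-]_\frakF$, then extending by additivity to finite direct sums, and finally descending to direct summands to handle the general finitely generated projective.

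First I would verify everything on a single generator $P = R[x,-]_\frakF$. By the Observation immediately preceding the statement, $P^* = R[x,-]_\frakF^* = R[-,x]_\frakF$, which is finitely generated free contravariant, settling part (i). For part (ii), a double application of the Yoneda-type computations from~\cite{mislin-03, nucinkis-04} gives $\Hom_{\OFG}(R[x,-]_\frakF, M) \isom M(x)$ and also $R[-,x]_\frakF \otimes_{\OFG} M \isom M(x)$ naturally in $M$; the composite of these identifications is the desired map $\varphi$. Part (iii) is entirely analogous, using the formula $M \otimes_{\OFG} R[x,-]_\frakF \isom M(x)$ and $\Hom_{\OFG}(R[-,x]_\frakF, M) \isom M(x)$. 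For (iv), iterating the Observation yields $P^{**} = (R[-,x]_\frakF)^* = R[x,-]_\frakF = P$, and one takes the identity (which is the Yoneda image of the canonical elements) as $\varphi''$.

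Next I would pass to finite direct sums $P = \bigoplus_{i=1}^k R[x_i,-]_\frakF$. All four functors in sight --- $\Hom_{\OFG}(-, M)$, $\Hom_{\OFG}(P, -)$, $(-)\otimes_{\OFG} M$, $M \otimes_{\OFG} (-)$, and the dualisation functor $(-)^*$ --- are additive, and $(-)^*$ is even exact by the remarks just after the definition of the dual. Hence $P^*$ is again finitely generated free (and in particular projective), and the isomorphisms $\varphi$, $\varphi'$, $\varphi''$ constructed on each summand assemble into natural isomorphisms on $P$. This handles the finitely generated free case.

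Finally, suppose $P$ is finitely generated projective, so there exists a finitely generated free $F$ and a splitting $F \isom P \oplus Q$. Part (i) then follows because $F^* \isom P^* \oplus Q^*$ exhibits $P^*$ as a direct summand of the finitely generated free module~$F^*$. For (ii)--(iv), the naturality established in the previous step lets the isomorphisms for~$F$ restrict to isomorphisms of direct summands indexed by $P$; the idempotent $F \to P \to F$ corresponds under the natural transformations to compatible idempotents on each of $\Hom_{\OFG}(P,M)$, $P^*\otimes_{\OFG} M$, $M \otimes_{\OFG} P$, and $P^{**}$, and passing to images yields the required natural isomorphisms.

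The only delicate step is checking naturality of~$\varphi$, $\varphi'$, $\varphi''$ on generators, since one must track the Yoneda identifications on both sides simultaneously; once naturality is in hand, the additivity and direct-summand arguments are formal. Everything else is bookkeeping with the Yoneda-type calculations already in~\cite{mislin-03, nucinkis-04}.
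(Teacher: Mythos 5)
Your proposal is correct and follows essentially the same route as the paper: the paper's proof likewise establishes the statements for finitely generated free modules (citing the Yoneda-type verification in~\cite{nucinkis-04}, which mirrors the classical argument of~\cite[Proposition I.8.3]{brown-82}) and then passes to finitely generated projectives via $F \isom P \oplus Q$ and $F^* \isom P^* \oplus Q^*$. You merely spell out the generator-and-additivity step that the paper delegates to a reference.
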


\begin{proof} The proof of the analogous statement for ordinary cohomology ~\cite[Proposition I.8.3.]{brown-82} works nearly verbatim  
in the Bredon setting. This was checked for finitely generated free Bredon-modules in \cite{nucinkis-04}.  The result for projectives follows from the fact, that for finitely generated projective modules $P$, there is a finitely generated free $F$ and a module $Q$, such that $F \cong P\oplus Q$, and $F^* \cong P^* \oplus Q^*.$
\end{proof}

\subsection{Flat ${\OFG}$-modules} The following result was shown in  \cite[Theorem 3.2.]{nucinkis-04} for $\frakF =\Ffin,$ but the proof works for arbitrary families $\frakF.$

\begin{proposition}\label{lazard}
Let $M$ be a contravariant $\OFG$-module. Then the following are equivalent:
\begin{enumerate}
\item $M$ is flat.
\item $M$ is a colimit of finitely generated free $\OFG$-modules.
\end{enumerate}
\end{proposition}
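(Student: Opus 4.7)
The plan is to imitate Lazard's classical theorem, which works for any small ringoid and in particular for the ringoid $R\OFG$ that underlies $\CRMod$.

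For (ii) $\Rightarrow$ (i): each free generator $R[-, x]_\frakF$ is projective (by the Yoneda computation recalled in the paper), hence flat. The tensor product $- \otimes_\frakF N$ commutes with all colimits, while filtered colimits are exact pointwise in $\Ab$, so a filtered colimit of flat contravariant $\OFG$-modules remains flat.

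For (i) $\Rightarrow$ (ii) the canonical diagram is the comma category $\calC_M$ whose objects are pairs $(F, \alpha \colon F \to M)$ with $F$ a finitely generated free contravariant $\OFG$-module and whose morphisms are module maps $\varphi \colon F \to F'$ with $\alpha' \circ \varphi = \alpha$. Since $\{R[-, x]_\frakF\}_{x \in \OFG}$ is a generating family for $\CRMod$, one checks directly that $M = \colim_{(F,\alpha) \in \calC_M} F$, regardless of whether $M$ is flat. The task is to show that $\calC_M$ is \emph{filtered} when $M$ is flat, as then (ii) follows immediately.

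Filteredness of $\calC_M$ is reduced to the Lazard factorisation lemma: if $M$ is flat, then every morphism $\beta \colon P \to M$ from a finitely presented contravariant $\OFG$-module $P$ factors through a finitely generated free module. Once that is available, an upper bound for two objects is provided by $F_1 \oplus F_2$ with the obvious map to $M$; for a parallel pair $\varphi, \psi \colon (F, \alpha) \rightrightarrows (F', \alpha')$ one applies the factorisation lemma to the induced map $F'/(\varphi - \psi)(F) \to M$, producing a coequalising arrow in $\calC_M$.

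The main obstacle is the factorisation lemma in the ringoid setting, which is the $\OFG$-analogue of the equational criterion for flatness. The argument parallels the one-object case word for word: a finite relation among elements of $M$ is translated into a map $F_0 \to M$ from a finitely generated free module together with a finitely generated submodule of relations, and flatness is used in the form $\Tor_1^\frakF(M, N) = 0$ on finitely presented covariant $N$ to split this data through a finitely generated free module. This was carried out for $\frakF = \Ffin$ in \cite[Theorem 3.2]{nucinkis-04}, and nothing in that proof uses anything about $\Ffin$ beyond the ringoid structure of $\OFG$ (existence of the canonical free generators $R[-, x]_\frakF$, pointwise computation of tensor products and colimits, and the Yoneda isomorphism), so the argument transfers verbatim to an arbitrary family $\frakF$.
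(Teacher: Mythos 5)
Your argument is correct and is in essence an expanded version of what the paper does: the paper simply cites \cite[Theorem~3.2]{nucinkis-04} (which is precisely the Lazard-type argument via the factorisation/equational criterion you describe) and observes, exactly as you do, that nothing in that proof depends on the family being $\Ffin$. The only point worth flagging is that ``colimit'' in the statement must be read as \emph{filtered} colimit (as you implicitly do, and as the paper does when it later invokes the result), since (ii)$\Rightarrow$(i) fails for arbitrary colimits.
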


\section{The Mittag--Leffler Condition}\label{ML-section}

The directed sets we consider in this article are all countable. Hence, when considering limits 
or colimits of objects~$\{A_\alpha\}_{\alpha\in D}$ directed by a countable 
directed set $D$, we may assume that  
we are dealing with a \emph{tower}
\begin{equation*}
\ldots \to A_4 \to A_3 \to A_2 \to A_1 \to A_0
\end{equation*}
or respectively with a \emph{co-tower} 
\begin{equation*}
\ldots \leftarrow A_4 \leftarrow A_3 \leftarrow A_2 \leftarrow A_1 \leftarrow 
A_0
\end{equation*}
of objects.

\begin{DEF}
\cite[Definition 3.5.6.]{weibel-94}
\label{def:m-l}
Let $\{A_\alpha\}$ be a tower of objects in an abelian category.

\begin{enumerate}
\item The tower of objects is said to satisfy the \emph{Mittag--Leffler 
condition,} if for any $k$ there exists a $j\geq k$ such that for any $i\geq j$ 
the image of $A_i\to A_k$ equals the image of $A_j\to A_k$.

\item The tower of objects is said to satisfy the \emph{strict (or trivial) Mittag--Leffler 
condition,} if for any $k$ there exists a $j\geq k$ such that the map $A_j\to 
A_k$ is zero.
\end{enumerate}
\end{DEF}

If the abelian category $\mathfrak{A}$  is cocomplete and has enough 
injectives, then one can define the right derived functors of the 
$\varprojlim$-functor. If in addition, the category $\frakA$ satisfies the Grothendieck Axiom (AB4*) then
the following explicit construction of the right derived functors of
$\varprojlim$ works for \emph{countable} limits, that is limits of towers, see \cite[\S 3.5.]{weibel-94}:

\begin{enumerate}
\item $\varprojlim^0 A_\alpha := \varprojlim A_\alpha$

\item $\varprojlim^1 A_\alpha := \coker(\Delta)$ where $\Delta\:
\prod A_\alpha \to \prod A_\alpha$ is the homomorphism as defined
in~\cite[p.~81]{weibel-94}

\item $\varprojlim^n
A_\alpha := 0$ for $n\geq 2$.
\end{enumerate}

\begin{DEF}
\label{def:m-l-weak}
Consider the functor category $\frakA^I$ where $\frakA$ is an abelian category
and $I$ is a small category. Let $\{A_\alpha\}$ be a tower of objects in
$\frakA^I$.

We say that $\{A_\alpha\}$ satisfies the (strict) Mittag--Leffler condition in 
the \emph{weak sense,} if for every $i\in I$ the tower $\{A_\alpha(i)\}$ of 
objects in $\frakA$ statisfies the (strict) Mittag--Leffler condition.

If we want to emphasise that the tower $\{A_\alpha\}$ satisfies the (strict) 
Mittag--Leffler condition not just in the weak sense, but as defined in 
Definition~\ref{def:m-l}, then we may say that the (strict) Mittag--Leffler 
condition is satisfied in the \emph{strong sense}.
\end{DEF}

\begin{observation}
\label{obs:key-obs}
Let $\{A_\alpha\}$ be a tower of objects in $\frakA^I$ which satisfies the 
(strict) Mittag--Leffler condition in the weak sense. If $I$ has only finitely 
many isomorphism classes of objects, then $\{A_\alpha\}$ satisfies the (strict) 
Mittag--Leffler condition in the strong sense. 
\end{observation}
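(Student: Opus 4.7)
The plan is to reduce the strong Mittag--Leffler condition for the tower $\{A_\alpha\}$ in $\frakA^I$ to its weak version, evaluated on a finite set of representatives of the isomorphism classes of objects of $I$. A morphism in $\frakA^I$ is a natural transformation, and its image is the subfunctor obtained by taking the image of each component. Because any functor preserves isomorphisms, naturality will force the behaviour of this image subfunctor at an arbitrary object of $I$ to be controlled by its behaviour at the representative of the isomorphism class in which it lies.

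By hypothesis I can fix a finite set $\{i_1,\ldots,i_n\}$ of such representatives. Given $k$, I apply the weak Mittag--Leffler condition at each $i_s$ to obtain an index $j_s \geq k$ such that, for all $i\geq j_s$,
\[
\Im\bigl(A_i(i_s) \to A_k(i_s)\bigr) \;=\; \Im\bigl(A_{j_s}(i_s) \to A_k(i_s)\bigr);
\]
in the strict case, I instead arrange that $A_{j_s}(i_s) \to A_k(i_s)$ is zero. Setting $j := \max\{j_1,\ldots,j_n\}$, the stabilisation (respectively, vanishing) is achieved uniformly at $j$ for each representative $i_s$.

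To propagate this to an arbitrary $y\in I$, I pick an isomorphism $\varphi\: y \to i_s$; both $A_i(\varphi)$ and $A_k(\varphi)$ are then isomorphisms. Naturality of the tower's structure map $A_i \to A_k$ with respect to $\varphi$ yields
\[
(A_i\to A_k)_y \;=\; A_k(\varphi)^{-1} \comp (A_i \to A_k)_{i_s} \comp A_i(\varphi),
\]
so, since $A_i(\varphi)$ is surjective and $A_k(\varphi)^{-1}$ injective,
\[
\Im\bigl((A_i\to A_k)_y\bigr) \;=\; A_k(\varphi)^{-1}\Bigl(\Im\bigl((A_i\to A_k)_{i_s}\bigr)\Bigr).
\]
Hence for all $i\geq j$ the image at $y$ agrees with its value at $i=j$ (respectively, the map at $y$ is zero), which establishes the strong (strict) Mittag--Leffler condition at level $k$. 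The only conceptual point is the transport of image subfunctors along isomorphisms of $I$, which is a routine consequence of naturality and of functors preserving isomorphisms; I do not anticipate any genuine obstacle here.
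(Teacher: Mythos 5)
Your argument is correct and is precisely the routine verification the paper leaves implicit: the paper states this as an Observation with no proof at all. Choosing a uniform index as the maximum over a finite set of representatives and transporting image subobjects along isomorphisms via naturality (using that images in $\frakA^I$ are computed objectwise) is exactly the intended reasoning.
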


\section{Proof of Theorem A}

The following, preliminary result  is essentially  \cite[p.~16, statement~3.1.3]{raynaud-71} translated to our setting. Since it needs some additional argument,   we include a detailed proof.

\begin{proposition}
\label{prop:raynaud-71}
Let $P$ be a contravariant ${\OFG}$-module. Let 
\begin{equation*}
\ldots \leftarrow L_{4} \leftarrow L_{3} \leftarrow L_{2} \leftarrow
L_{1}\leftarrow L_{0}
\end{equation*}
be a co-tower of finitely generated free contravariant
${\OFG}$-modules such that $P=\varinjlim L_\alpha$.
Then the following statements are equivalent:

\begin{enumerate}
\item\label{prop:raynaud-71-1}
$P$ is projective.

\item\label{prop:raynaud-71-2}
The tower
\begin{equation*}
\ldots \to L_3^* \to L_2^* \to L_1^* \to L_0^*
\end{equation*}
satisfies the Mittag--Leffler condition in the strong sense.
\end{enumerate}
\end{proposition}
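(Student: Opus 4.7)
The plan is to identify $\Ext^1_{\OFG}(P,M)$ with the derived inverse limit $\varprojlim^1$ of a concrete tower, and then translate the two implications into statements about Mittag--Leffler. The starting point will be the mapping-telescope short exact sequence
\[
0 \to \bigoplus_\alpha L_\alpha \xrightarrow{1-\sigma} \bigoplus_\alpha L_\alpha \to P \to 0,
\]
where $\sigma$ is induced by the structure maps of the co-tower. Applying $\Hom_{\OFG}(-,M)$, invoking Proposition~\ref{prop:dualising-bredon-modules} to rewrite $\Hom_{\OFG}(L_\alpha,M) \cong L_\alpha^* \otimes_{\OFG} M$, and using $(\bigoplus_\alpha L_\alpha)^* \cong \prod_\alpha L_\alpha^*$, will yield a natural isomorphism
\[
\Ext^1_{\OFG}(P,M) \;\cong\; \varprojlim\nolimits^1 \bigl(L_\alpha^* \otimes_{\OFG} M\bigr),
\]
with the derived limit realised as the cokernel of the standard $\Delta$-map. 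Proving projectivity of $P$ then becomes proving vanishing of this group for every $M$, and the task reduces to matching vanishing of $\varprojlim^1$ with the Mittag--Leffler condition on $\{L_\alpha^*\}$.

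For $(2)\Rightarrow(1)$ I will show that strong Mittag--Leffler for $\{L_\alpha^*\}$ transports to Mittag--Leffler for the abelian-group tower $\{L_\alpha^*\otimes_{\OFG} M\}$ for every contravariant $M$: right-exactness of $-\otimes_{\OFG} M$ means the equality $\Im(L_i^*\to L_k^*) = \Im(L_j^*\to L_k^*)$ of subfunctors forces the analogous equality of images inside $L_k^*\otimes_{\OFG} M$. The classical Mittag--Leffler lemma then gives $\varprojlim^1(L_\alpha^*\otimes_{\OFG} M)=0$, hence $\Ext^1_{\OFG}(P,M)=0$ for every $M$, so $P$ is projective.

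For $(1)\Rightarrow(2)$ I will use that projectivity splits the telescope sequence, yielding a section $s\: P \to \bigoplus_\alpha L_\alpha$ of the canonical projection $\pi$. Fix $k$. Finite generation of $L_k$ guarantees that $s\circ i_k$ factors through a finite subsum $\bigoplus_{\alpha\le N}L_\alpha$, whose components will be denoted $s^{(\alpha)}_k\: L_k \to L_\alpha$. The identity $\pi\circ s = \id_P$ reads $\sum_\alpha i_\alpha(s^{(\alpha)}_k(x)) = i_k(x)$ in $P$, and a second use of finite generation of $L_k$ together with $P=\varinjlim L_\alpha$ will produce an index $M(k)\ge N$ at which this identity already holds in $L_{M(k)}$, namely $\sigma_{k,M(k)}(x) = \sum_\alpha \sigma_{\alpha,M(k)}(s^{(\alpha)}_k(x))$ for every $x\in L_k$. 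The claim to verify is that $M(k)$ is a uniform Mittag--Leffler witness at level $k$. Given $j\ge M(k)$ and $\phi = \psi\circ\sigma_{k,j}$ with $\psi\in L_j^*$, I will assemble $\tilde\psi\in\prod_\alpha L_\alpha^*$ with $\tilde\psi_\alpha = \psi\circ\sigma_{\alpha,j}$ for $\alpha\le j$ and zero otherwise. A direct substitution using the above identity shows that $s^*(\tilde\psi)\in P^*$ satisfies $s^*(\tilde\psi)\circ i_k = \phi$, placing $\phi$ in $\Im(i_k^*\: P^*\to L_k^*)$; since this image is automatically contained in every $\Im(L_j^*\to L_k^*)$, the decreasing chain of images stabilises at $M(k)$ to $\Im(P^*\to L_k^*)$, which is strong Mittag--Leffler.

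The main obstacle will be the construction in $(1)\Rightarrow(2)$: producing a single index $M(k)$ that serves as a uniform Mittag--Leffler witness in the strong sense. Finite generation of $L_k$ is needed twice, first to truncate $s\circ i_k$ to a finite subsum, and second to lift the identity $\pi\circ s=\id_P$ from the colimit $P$ to an equality inside a single $L_{M(k)}$; the delicate step will be aligning these two finite levels into the same natural number so that the resulting $\tilde\psi$ recovers $\phi$ on the nose.
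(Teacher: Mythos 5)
Your proposal is correct, and the two directions split as follows. For \ref{prop:raynaud-71-2} $\implies$ \ref{prop:raynaud-71-1} you do essentially what the paper does: transport the Mittag--Leffler condition along the right exact functor $-\otimes_{\OFG}M$, identify $\Hom_{\OFG}(L_\alpha,M)$ with $L_\alpha^*\otimes_{\OFG}M$ via Proposition~\ref{prop:dualising-bredon-modules}, and conclude $\Ext^1_{\OFG}(P,M)\isom\varprojlim^1\Hom_{\OFG}(L_\alpha,M)=0$; whether one obtains that isomorphism from the mapping telescope or from \cite[Application 3.5.10]{weibel-94} is immaterial. For \ref{prop:raynaud-71-1} $\implies$ \ref{prop:raynaud-71-2} your route is genuinely different. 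The paper first reduces to the case where $P$ is free on a $G$-set $X$, traps the image of $L_k$ in a finitely generated free direct summand $P_k$ of $P$, uses finite generation of $P_k$ to factor $L_k\to L_j$ through $P_k$ for $j$ large, and identifies the stable image as $\Im(P_k^*\to L_k^*)$ using that $L_j^*\onto P_k^*$ is (split) surjective. You instead split the telescope sequence directly, use finite generation of $L_k$ twice (to truncate $s\comp i_k$ to a finite subsum and to lift $\pi\comp s=\id_P$ to an identity $\sigma_{k,M(k)}=\sum_\alpha\sigma_{\alpha,M(k)}\comp s_k^{(\alpha)}$ at a finite stage --- legitimate because $L_k$ is finitely generated \emph{free}, so maps out of it are determined by finitely many elements and equalities in the filtered colimit are witnessed at a finite stage), and identify the stable image as $\Im(P^*\to L_k^*)$. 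Your argument avoids the somewhat delicate reduction to $P$ free and works uniformly over the objects of $\OFG$, so it does yield the condition in the strong sense; what the paper's version buys is a more geometric picture of the stabilised image as the dual of a concrete finitely generated summand, which is closer to the original argument of Raynaud--Gruson. Both are complete proofs.
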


\begin{proof} 
\ref{prop:raynaud-71-1} $\implies$ \ref{prop:raynaud-71-2}:  Since every projective is a direct summand of a free and by Proposition~\ref{lazard}, we can assume that $P$ is free on the ${\OFG}$-set $X$.
Now fix a $k\geq 0$ and assume that $L_k$ is finitely generated by the ${\OFG}$-set $Y_k$. Let $X_k \subseteq X$ be a $\OFG$-subset big enough so that the image of $L_k$ in $P$ is contained in $P_k$ the free $\OFG$-module generated by $X_k$. Note that $P_k$ is a direct summand of $P$ and that
 the canonical morphism $L_k \to P$ factors through  $P_k.$ 
 For each~$x \in X_k$ there is an~$i_x \geq k$ such that $x \in P$ is in the image of the canonical morphism $L_{i_x} \to P$. Now put  $i = \max\{i_x \, |\, x \in X_k\}$. Hence, for each $j\geq i$  the canonical map $L_k \to L_j$ factors through $P_k$.

Dualising now yields a surjection $P^* \onto P^*_k$ and the map $L_j^* \to L_k^*$ factors through~$P_k^*$ such that $L_j^* \onto P_k^*$.
Hence 
$$\Im(L_j^* \to L_k^*) = \Im(P_k^* \to L_k^*)$$
is independent of $j$ and hence the claim follows.

\smallskip

\ref{prop:raynaud-71-2} $\implies$ \ref{prop:raynaud-71-1}:  Since $- \mathbin{\otimes_{\OFG}} M$ is a right exact 
functor it follows  that the 
tower $(L_\alpha^* \mathbin{\otimes_{\OFG}} M)$ of abelian groups satisfies 
the Mittag--Leffler condition for any left ${\OFG}$-module~$M$. The natural 
isomorphism $L_\alpha^*\mathbin{\otimes_{\OFG}} M \isom \Hom_{\OFG}(L_\alpha, 
M)$ of Proposition~\ref{prop:dualising-bredon-modules} implies, that 
$(\Hom_{\OFG}(L_\alpha, M))$ is a tower satisfying  the 
Mittag--Leffler condition as well.

Consider the sequence of natural isomorphisms
\begin{equation*}
\Hom_{\OFG}(P, M) 
\isom
\Hom_{\OFG}(\varinjlim L_\alpha, M)
\isom
\varprojlim \Hom_{\OFG}(L_\alpha, M)
\end{equation*}
which in turn yields an isomorphism
\begin{equation*}
\Ext_{\OFG}^1(P,M) \isom \varprojlim\nolimits^1 \Hom_{\OFG}(L_\alpha, M).
\end{equation*}
Since $(\Hom_{\OFG}(L_\alpha, M))$ is a tower of abelian groups which satisfies 
the Mittag--Leffler condition, it follows that $\varprojlim\nolimits^1 
\Hom_{\OFG}(L_\alpha, M)=0$, see for example \cite[Proposition~3.5.7]{weibel-94}. Therefore $\Ext_{\OFG}^1(P,M)=0$ and since this is 
true for arbitrary~$M$ it follows that $P$ is projective.
\end{proof}

\begin{lemma}\label{lem:cat-gray-66} Let $(A_\alpha)$ be a tower of ${\OFG}$-modules, such that $A_\alpha(x)$ is countable for each $\alpha \in \N$ and $x \in {\OFG}$. Then the following 
conditions are equivalent:
\begin{enumerate}
\item The tower $(A_\alpha)$ satisfies the Mittag--Leffler condition in the weak sense.

\item $\varprojlim^1 A_\alpha(x) = 0$ for every $x \in \OFG.$

\item $\varprojlim^1 A_\alpha(x)$ is a countable $R$-module for every $x \in \OFG.$

\end{enumerate}
\end{lemma}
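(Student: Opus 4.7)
The plan is a pointwise reduction to the classical theorem of Gray \cite{gray-66}, which states that for a tower of countable abelian groups the Mittag--Leffler condition, the vanishing of $\varprojlim^1$, and the countability of $\varprojlim^1$ are mutually equivalent.

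I would first unwind the definitions. By Definition~\ref{def:m-l-weak}, condition~(i) says exactly that for every object $x \in \OFG$ the tower of $R$-modules $\{A_\alpha(x)\}$ satisfies the Mittag--Leffler condition of Definition~\ref{def:m-l}. Conditions~(ii) and~(iii) are already stated pointwise, so the equivalence of (i), (ii), (iii) is equivalent to the statement that, for each fixed $x \in \OFG$, the corresponding three conditions for the single tower $\{A_\alpha(x)\}$ are equivalent. The hypothesis that every $A_\alpha(x)$ is countable places us precisely in the setting of Gray's theorem (applied to the underlying abelian groups), and the equivalence follows.

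The only subtlety, easily dispatched, is that Gray's theorem is stated for abelian groups while we work with $R$-modules. This causes no trouble because the Mittag--Leffler condition, the vanishing of $\varprojlim^1$, and countability are all invariant under the forgetful functor to $\Ab$; the cokernel construction for $\varprojlim^1$ recalled in \cite[p.~81]{weibel-94} is $R$-linear when the transition maps are, so the resulting abelian group inherits an $R$-module structure but its underlying set is computed in $\Ab$. I do not anticipate any serious obstacle: the lemma is essentially Gray's result repackaged in the functor category $\OFGMod$, and the pointwise nature of both the hypotheses and of all three conditions makes the reduction immediate.
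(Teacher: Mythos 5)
Your proposal is correct and matches the paper's own proof, which simply cites Gray's theorem for towers of countable abelian groups and applies it pointwise to each $x \in \OFG$. Your extra remark about passing between $R$-modules and their underlying abelian groups is a harmless elaboration of the same argument.
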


\begin{proof} This was proved in  \cite{gray-66} for abelian groups.  The result now follows by applying that result to every $x \in \OFG.$
\end{proof}

\begin{proposition}\label{propA}
Let ${\OFG}$ be of finite isomorphism type and 
let $M$ be a countably generated flat contravariant ${\OFG}$-module. Then, the following 
conditions are equivalent:
\begin{enumerate}
\item\label{propA-1} $M$ is projective.

\item\label{propA-2} $\Ext_{\OFG}^1\Bigl(M; \prod\limits_{x\in {\OFG}_0} \Z[-,x]\Bigr)=0$.

\item\label{propA-3} $\Ext_{\OFG}^1\Bigl(M; \prod\limits_{x\in {\OFG}_0} \Z[-,x]\Bigr)$ is a countable abelian group.
\end{enumerate}
\end{proposition}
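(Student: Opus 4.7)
The plan is to reduce the implication (iii) $\Rightarrow$ (i), which is the only non-trivial one, to Proposition \ref{prop:raynaud-71}, via Proposition \ref{lazard}, Lemma \ref{lem:cat-gray-66}, and Observation \ref{obs:key-obs}. The implications (i) $\Rightarrow$ (ii) and (ii) $\Rightarrow$ (iii) are immediate: projectivity annihilates $\Ext_{\OFG}^1(M, -)$, and the trivial group is countable.

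For (iii) $\Rightarrow$ (i), I will first invoke Proposition \ref{lazard} and use countable generation of $M$ to write $M = \varinjlim L_\alpha$ as the colimit of a countable co-tower of finitely generated free contravariant ${\OFG}$-modules. Passing to a skeleton of ${\OFG}$, which is finite under the finite-isomorphism-type hypothesis, the product $N := \prod_{x \in {\OFG}_0} \Z[-,x]$ reduces to a finite product. The computation carried out in the proof of Proposition \ref{prop:raynaud-71} then yields a natural isomorphism
\begin{equation*}
\Ext_{\OFG}^1(M, N) \cong \varprojlim\nolimits^1 \Hom_{\OFG}(L_\alpha, N).
\end{equation*}
Since $\Hom$ commutes with products in the second variable, since $\Hom_{\OFG}(L_\alpha, \Z[-,x]) \cong L_\alpha^*(x)$ by the definition of the dual together with Proposition \ref{prop:dualising-bredon-modules}, and since $\varprojlim^1$ commutes with finite products, this may be rewritten as
\begin{equation*}
\Ext_{\OFG}^1(M, N) \cong \prod_{x \in {\OFG}_0} \varprojlim\nolimits^1 L_\alpha^*(x).
\end{equation*}

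Hypothesis (iii) then forces each factor $\varprojlim\nolimits^1 L_\alpha^*(x)$ to be countable. Since $G$ is countable, every morphism set $\mor_{\OFG}(y, x)$ is countable, and hence so is each abelian group $L_\alpha^*(x)$. Applying Lemma \ref{lem:cat-gray-66} at each object $x \in {\OFG}_0$ then shows that the tower $\{L_\alpha^*(x)\}$ of countable abelian groups satisfies the Mittag--Leffler condition; in other words the tower $\{L_\alpha^*\}$ of covariant ${\OFG}$-modules satisfies the Mittag--Leffler condition in the weak sense. Observation \ref{obs:key-obs}, applied to the finite-isomorphism-type category ${\OFG}$, upgrades this to the strong Mittag--Leffler condition, and Proposition \ref{prop:raynaud-71} then concludes that $M$ is projective.

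The step requiring the most bookkeeping is the identification $\Hom_{\OFG}(L_\alpha, N) \cong \prod_{x} L_\alpha^*(x)$ together with keeping variances straight (so that $\{L_\alpha^*\}$ is correctly treated as a tower of covariant ${\OFG}$-modules for Observation \ref{obs:key-obs}); after that, the argument is a clean assembly of the preceding results.
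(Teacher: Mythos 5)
Your proposal is correct and follows essentially the same route as the paper's proof: Lazard's criterion to write $M$ as a colimit of finitely generated frees, the $\varprojlim^1$ identification of $\Ext^1_{\OFG}$, Lemma \ref{lem:cat-gray-66} to get the weak Mittag--Leffler condition, the finite-isomorphism-type hypothesis (Observation \ref{obs:key-obs}) to upgrade it to the strong condition, and Proposition \ref{prop:raynaud-71} to conclude. You supply somewhat more detail than the paper at the step identifying $\Hom_{\OFG}(L_\alpha, \prod_x \Z[-,x])$ with $\prod_x L_\alpha^*(x)$ and deducing countability of each factor, but the argument is the same.
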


\begin{proof}
As before we have that \ref{propA-1} $\implies$ \ref{propA-2} $\implies$ \ref{propA-3}. To show \ref{propA-3} $\implies$ \ref{propA-1} 
we follow the same outline as the proof of \cite[Theorem 1.3]{emmanouil-12} with some adaptation to the setting of ${\OFG}$-modules.
Since $M$ is countably generated and flat, it is the direct limit of finitely generated free modules $L_\alpha,$ for $\alpha \in D$ some countable directed set.
By \cite[Application 3.5.10]{weibel-94}, see also the remark before that Application,  and the fact that the $L_\alpha$ are projective, we have
$$\Ext_{\OFG}^1(M; \prod\limits_{x\in {\OFG}_0}\Z[-,x]) \isom \varprojlim\nolimits^1 \Hom_{\OFG}(L_\alpha, \prod\limits_{x\in {\OFG}_0}\Z[-,x]).$$
In particular, $ \varprojlim\nolimits^1 \Hom_{\OFG}(L_\alpha, \prod\limits_{x\in {\OFG}_0}\Z[-,x])$ is a countable abelian group and hence 
Lemma \ref{lem:cat-gray-66} implies  that the inverse system $\Hom_{\OFG}(L_\alpha, R{\OFG})=L_\alpha^*$ satisfies the Mittag--Leffler condition in the weak sense. But since ${\OFG}$ is of finite isomorphism type, it satisfies the Mittag--Leffler condition in the strong sense.
 
Finally apply Proposition \ref{prop:raynaud-71}.
\end{proof}

\smallskip\noindent{\bf Proof of Theorem A.} This now follows directly from Proposition \ref{propA} by an easy dimension shift. \qed


\section{Groups of Bredon-homological dimension $1$}\label{Bredonhom-1}

In this section we will consider Bredon-cohomology for the family of finite subgroups only, and  we will  formulate a Bredon-analogue to Bieri's conjecture that every group of homological dimension $1$ is locally free, or equivalently, that every finitely generated group of homological dimension $1$ is free. The naive analogue to this conjecture is to ask whether every finitely generated group of Bredon-homological dimension $1$ is virtually free, but this is not the case as the lamplighter group shows~\cite{bridson-12}. Hence a rational version of Bieri's conjecture would need some further restrictions as well.
We propose a somewhat stronger assumption than finite generation, and suggest that the Bredon-analogue to finite generation is being of type Bredon-$\FP_1$. 

To stay in line with convention, we shall, in this section, denote the Bredon finiteness conditions by underlining. In particular, we put $\hd_{\Ffin} G = \uhd G,$ $\cd_{\Ffin} G = \ucd G,$ and denote by $\ul{\FP}_n$ the condition Bredon-$\FP_n$ for the family $\frakF=\Ffin.$

We ask the following question and devote the rest of the paragraph to giving some evidence for a positive answer, see Theorem \ref{hdf1}.

\begin{question}\label{conj:bredon-bieri}
Let $G$ be a group of type $\ul{\FP}_1$ and with $\uhd G=1$. Is~$G$  virtually free?

\end{question}

Note that for torsion-free groups this question is exactly the question posed by the finite generation version of Bieri's original conjecture. It immediately follows from \cite{dunwoody79} that an infinite virtually free group has $\hd_\frakF G =1$. Using Bass--Serre theory and a result by Karras, Pietrowski and Solitar, Dunwoody actually shows that a finitely generated group has $\cd_\Q G=1$ if and only if $G$ contains a free subgroup of finite index \cite[Corollary 1.2]{dunwoody79}. Furthermore, see \cite[Theorem 1.1]{dunwoody79}, a group~$G$ has $\cd_\Q G=1$ if and only if $G$ acts on a tree without inversions and with finite vertex stabilisers. This, in turn, implies that $\cd_\Q G =1$ if and only if $\ucd G =1.$ 

Also note that Question \ref{conj:bredon-bieri} has a positive answer for elementary amenable groups: they have Hirsch-length $1$, hence are locally-finite-by-(virtually torsion-free soluble) \cite[(g)]{wehrfritz}. Now the $\ul{\FP}_0$ condition implies that there is a bound on the orders of the finite subgroups. Hence the group is finitely generated and finite-by-virtually torsion-free soluble of Hirsch-length $1$ and thus  is virtually infinite cyclic.

\begin{remark}\label{lnremark}
In the above question one can  weaken the hypothesis of $\ul{\FP}_1$ to only asking for the group to be finitely generated and of type $\ul{\FP}_0$. There are examples of finitely generated groups (even $VF$-groups) that are of type $\ul{\FP}_0$ but not of type~$\ul{\FP}_1$, see \cite[Example 4]{ln-03}. These examples are, however, of Bredon-homological dimension at least $2$. Hence the question arises whether there are such examples of Bredon-homological dimension $1$.
\end{remark}

\begin{lemma}\label{coherent-lem}
Let $G$ be a virtually free group. Then the group ring $\Z G$ is coherent.
\end{lemma}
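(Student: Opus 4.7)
My plan is to reduce to the free-group case via a finite-index free subgroup and then transfer coherence along a finite free ring extension. By hypothesis $G$ has a free subgroup $F$ of finite index, and $\Z G$ is then free of finite rank $[G:F]$ as a (left or right) $\Z F$-module, a basis being any complete set of coset representatives. So it suffices to show (i) that $\Z F$ is coherent, and (ii) that coherence passes from $\Z F$ to $\Z G$ along this finite free extension of rings.

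For (i), since $F$ is free its cohomological dimension over $\Z$ is at most one, so $\Z F$ has (left and right) global dimension at most one and is therefore hereditary. Every hereditary ring is coherent: a finitely generated ideal $I$ of $R$ is a submodule of the projective module $R$, hence projective, and any finitely generated projective $R$-module is a direct summand of some $R^n$, hence finitely presented.

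For (ii), set $R = \Z F$ and $S = \Z G$, so that $S$ is finitely generated free as an $R$-module. Restriction of scalars along $R \hookrightarrow S$ preserves and reflects both finite generation and finite presentation of $S$-modules: the forward direction is immediate since $S^n$ restricts to $R^{n[G:F]}$, and the backward direction follows from a single application of Schanuel's lemma, comparing a finite $R$-presentation of an $S$-module with any $S$-surjection from a finitely generated free $S$-module. Given a finitely generated $S$-submodule $N$ of a finitely presented $S$-module $M$, one therefore has $N$ finitely generated and $M$ finitely presented over $R$, so coherence of $R$ gives that $N$ is finitely presented over $R$, and hence over $S$. Thus $\Z G$ is coherent. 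The only step requiring care is this transfer argument in (ii), but the Schanuel-lemma comparison makes it routine once one has observed that $\Z G$ is finitely generated free over $\Z F$.
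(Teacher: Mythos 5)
Your step (ii) --- transferring coherence from $\Z F$ to $\Z G$ along the finite free ring extension, using Schanuel's lemma to see that restriction of scalars preserves and reflects finite generation and finite presentation --- is correct, and it is essentially the paper's own argument (the paper descends finite generation of the kernel from $\Z N$ back to $\Z G$ via a surjection from an induced module, but the content is the same). The gap is in step (i). From $\cd F\le 1$ you may only conclude that the \emph{trivial} module $\Z$ has projective dimension at most one over $\Z F$; this does not bound the global dimension of the ring by one. The correct bound is $\operatorname{gl.dim}(\Z G)\le \cd G+\operatorname{gl.dim}(\Z)$, and for nontrivial free $F$ the global dimension of $\Z F$ is in fact $2$: already for $F\cong\Z$ the finitely generated ideal $(2,\,t-1)\subset\Z[t,t^{-1}]$ is not projective, since $\Z[t,t^{-1}]/(2,t-1)\cong\Z/2$ has projective dimension $2$. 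So $\Z F$ is not hereditary, and your justification of its coherence collapses. (Over a field $k$ the group algebra $kF$ \emph{is} hereditary --- indeed a fir --- which is likely the source of the confusion; the coefficient ring $\Z$ adds one to the dimension.)

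The fact you need, that $\Z F$ is coherent for $F$ free, is nonetheless true, but it is a genuine theorem requiring a different argument: for instance it follows from the results of Choo, Lam and Luft on the coherence of free products of rings, viewing $\Z F$ as a coproduct over $\Z$ of copies of the Noetherian ring $\Z[t,t^{-1}]$. The paper itself simply asserts coherence of $\Z N$ without justification, so your instinct to supply one is good; but the hereditary argument must be replaced by a citation of this kind, after which the remainder of your proof goes through.
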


\begin{proof}
Since $G$ is virtually free, it has a finite index free subgroup $N$, whose group ring $\Z N$ is coherent. Now suppose we have a finitely generated $G$-module~$M$. Restriction to a finite index subgroup preserves finite generation, hence, as an $N$-module, $M$ is finitely presented. In particular, consider the short exact sequence of $G$-modules
$$ 0 \to K \to F \to M \to 0,$$ 
where $F$ is finitely generated free. As an $N$-module $K$ is finitely generated. Hence, the induced module $K \otimes_{\Z N} \Z G \cong K \otimes \Z[G/N]$ with the diagonal $G$-action is a finitely generated $G$-module. We also have a $G$-module epimorphism $\Z [G/N] \onto \Z$. Therefore $K \otimes \Z[G/N] \onto K,$ and $K$ is finitely generated as a $G$-module as required.
\end{proof}

Following \cite[Section 5]{emmanouil-12} we denote by $\frak S$ the class of groups for which the group ring $\Z G$ is a left $S$-ring; that is to say that every finitely generated flat $\Z G$-module is projective. We denote by $v\frak S$ the class of all groups containing a finite index subgroup belonging to $\frak S$.

Similarly, we define the class $\underline{\frak S}$ to be the class of groups for which every finitely generated flat ${\OFG}$-module is projective.

\begin{theorem}\label{hdf1}
Let $G$ be a group of type $\ul{\FP}_1$ and of $\uhd G=1$. Then the following are equivalent.

\begin{enumerate}
\item\label{hdf1-1} $G$ is virtually  free;
\item\label{hdf1-2} $G$ is virtually residually torsion-free nilpotent;
\item\label{hdf1-3} $G \in v\frak S;$
\item\label{hdf1-4} $G\in\underline{\frak S}$.
\end{enumerate}
\end{theorem}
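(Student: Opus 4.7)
The plan is to establish the four conditions equivalent via the cycle (i) $\Rightarrow$ (ii) $\Rightarrow$ (iii) $\Rightarrow$ (iv) $\Rightarrow$ (i). The substantive new step will be (iv) $\Rightarrow$ (i), which will use Proposition~\ref{propA} together with Dunwoody's characterisation of groups of rational cohomological dimension one~\cite{dunwoody79} recalled before the statement.

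For (i) $\Rightarrow$ (ii) I would invoke Magnus's classical theorem that free groups are residually torsion-free nilpotent, so that any finite-index free subgroup provides the required witness. For (ii) $\Rightarrow$ (iii) I would pass to a torsion-free finite-index subgroup that is residually torsion-free nilpotent and appeal to \cite[Section~5]{emmanouil-12} to place it in $\frak S$. For (iii) $\Rightarrow$ (iv) I would pick a finite-index $N\in\frak S$ and translate the $\Z N$-level $S$-ring property to the $\OFG$-level via restriction and induction along $N\hookrightarrow G$, using that $\ul{\FP}_0$ forces $\OFG$ to be of finite isomorphism type, so that only finitely many permutation building blocks need to be controlled.

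The key step (iv) $\Rightarrow$ (i) proceeds as follows. Since $G$ is of type $\ul{\FP}_1$, there is a partial projective $\OFG$-resolution $P_1\to P_0\to \ul{\Z}\to 0$ with $P_0,P_1$ finitely generated, so the kernel $K:=\ker(P_0\to\ul{\Z})$ is finitely generated as a quotient of $P_1$. The hypothesis $\uhd G=1$ makes $K$ flat by one dimension shift in the long exact sequence for $\Tor_*^{\frakF}$. By (iv) the finitely generated flat module $K$ is projective, whence $\ucd G \leq 1$. Evaluating the length-one projective resolution $0\to K\to P_0\to \ul{\Z}\to 0$ at $G/1$ and tensoring with $\Q$ gives a length-one projective $\Q G$-resolution of $\Q$, because each permutation module $\Q[G/H]$ with $H\in\Ffin$ is $\Q G$-projective (the group algebra $\Q H$ being semisimple). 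Thus $\cd_\Q G\leq 1$, and since $\uhd G=1$ rules out $G$ being finite we even have $\cd_\Q G=1$; Dunwoody's theorem then yields that $G$ is virtually free.

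The main obstacle I expect is step (iii) $\Rightarrow$ (iv), where the bridge between the classical $S$-ring property on $\Z N$ and its Bredon analogue on $\OFG$ is not transparent. A safe workaround is to replace it with a direct (iii) $\Rightarrow$ (i): pick a finite-index $N\in\frak S$, apply Theorem~\ref{main-et} to $N$ (rationally, or after first passing to a torsion-free finite-index sub-subgroup) to obtain $\cd N = 1$, and then invoke Stallings--Swan to conclude that $N$, and hence $G$ virtually, is free. This decouples the two $S$-ring formulations and lets Dunwoody and Stallings--Swan do the heavy lifting on both sides.
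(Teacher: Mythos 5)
Your direction \ref{hdf1-4} $\implies$ \ref{hdf1-1} is essentially the paper's argument (a finitely generated flat kernel extracted from the $\ul{\FP}_1$ resolution, projectivity from \ref{hdf1-4}, passage to $\cd_\Q G=1$, then Dunwoody), and the equivalence of \ref{hdf1-1}, \ref{hdf1-2} and \ref{hdf1-3} is indeed just a citation of \cite[Proposition~5.5]{emmanouil-12}. The genuine gap is that nothing in your proposal ever proves an implication \emph{into} \ref{hdf1-4}. Your planned cycle requires \ref{hdf1-3} $\implies$ \ref{hdf1-4}, which you yourself flag as ``not transparent'' and never actually carry out; and your proposed workaround --- replacing it by a direct \ref{hdf1-3} $\implies$ \ref{hdf1-1} --- does not repair the cycle, it simply detaches \ref{hdf1-4}: you would end up with the equivalence of \ref{hdf1-1}--\ref{hdf1-3} plus the single implication ``\ref{hdf1-4} $\implies$ \ref{hdf1-1}'', which is strictly weaker than the theorem.

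The missing implication is supplied in the paper as \ref{hdf1-1} $\implies$ \ref{hdf1-4}, and it is the most substantial step. One shows that for virtually free $G$ of type $\ul{\FP}_1$ every Weyl group $WK=N_G(K)/K$ with $K$ finite is virtually free and of type $\FP_1$ by \cite[Lemma~3.1]{kropholler-09}; that the group rings $\Z(WK)$ are coherent (Lemma~\ref{coherent-lem}, proved by an induction--restriction argument along a finite-index free subgroup); hence that a finitely generated flat $\OFG$-module $M$ has finitely presented evaluations $M(G/K)$. Then \cite[Lemma~3.2]{kropholler-09} upgrades $M$ to a finitely presented flat Bredon module, which is projective by \cite[Corollary~3.3]{nucinkis-04}. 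None of this coherence machinery appears in your sketch, and the ``restriction and induction along $N\hookrightarrow G$'' idea you gesture at for \ref{hdf1-3} $\implies$ \ref{hdf1-4} would still have to confront the same core difficulty, namely converting finite generation into finite presentation for flat Bredon modules. So you need either to work out that transfer in detail or, as the paper does, to route the argument into \ref{hdf1-4} through condition \ref{hdf1-1}.
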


\begin{proof}
$\ref{hdf1-1}\iff \ref{hdf1-2} \iff \ref{hdf1-3}$ follows directly from \cite[Proposition 5.5]{emmanouil-12}.  

$\ref{hdf1-1} \implies \ref{hdf1-4}$: By \cite[Lemma 3.1]{kropholler-09} it follows that for a group  of type Bredon-$\FP_n$,  the Weyl-group $WK=N_G(K)/K$ is of type $\FP_n$ for all $K \in \frakF$. Since $G$ is virtually free, it follows that $WK$ is virtually free as well. Hence by Lemma \ref{coherent-lem} the group rings $\Z(WK)$ are coherent. Now let $M$ be a finitely generated flat ${\OFG}$-module. Hence $M(G/K)$ is a finitely generated $\Z(WK)$-module. Since $\Z(WK)$ is coherent, it follows that $M(G/K)$ is finitely presented for all $K \in \frakF$. Since $G$ is, in particular, of type $\ul{\FP}_0$, \cite[Lemma 3.2]{kropholler-09} implies that $M$ is a finitely presented flat ${\OFG}$-module, and hence is projective \cite[Corollary 3.3]{nucinkis-04}.

$\ref{hdf1-4} \implies \ref{hdf1-1}$: Since $G$ is of type $\ul{\FP}_1$ we have  a short exact sequence:
$$0\to M(-) \to \prod_{K \in \calF}P_K(-) \to \Z(-) \to 0.$$
with  $\calF$  finite and $M(-)$  finitely generated. Since $\uhd G=1$, $M(-)$ is also flat. Now~$\ref{hdf1-4}$ implies that $M(-)$ is projective and hence $\ucd G =1$. This implies that $\cd_\Q G = 1.$
Using the fact that $G$ is finitely generated we can now apply \cite[Corollary 1.2]{dunwoody79} to get the claim.
\end{proof}


\section{Groups of rational homological dimension $1$}

In this final section we shall present a straightforward  generalisation of \cite[Proposition 3.2]{emmanouil-12}.

\begin{lemma}\label{countable-q-lem}
Let $G$ be a finitely generated group and $M$ be a countable $\Q G$-module. Then
$H^1_\Q(G, M)$ is a countable abelian group.
\end{lemma}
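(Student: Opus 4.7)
The plan is to compute $H^1_\Q(G,M) = \Ext^1_{\Q G}(\Q,M)$ from a carefully chosen partial projective resolution of the trivial module $\Q$ over $\Q G$, exploiting the fact that finite generation of $G$ gives control over the first two terms of such a resolution.

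First, I would note that since $G$ is finitely generated, the trivial module $\Q$ is of type $\FP_1$ over $\Q G$: starting from a finite generating set of $G$, one obtains a partial free resolution
\begin{equation*}
F_1 \to F_0 \to \Q \to 0
\end{equation*}
in which $F_0$ and $F_1$ are finitely generated free $\Q G$-modules. I would then extend this to a full free resolution $\cdots \to F_2 \to F_1 \to F_0 \to \Q \to 0$ in which $F_2$ is allowed to be of arbitrary rank.

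Next, I would apply $\Hom_{\Q G}(-,M)$ to obtain the cochain complex whose cohomology computes $\Ext^*_{\Q G}(\Q,M) = H^*_\Q(G,M)$. In particular,
\begin{equation*}
H^1_\Q(G,M) \;=\; \frac{\ker\bigl(\Hom_{\Q G}(F_1,M) \to \Hom_{\Q G}(F_2,M)\bigr)}{\operatorname{Im}\bigl(\Hom_{\Q G}(F_0,M) \to \Hom_{\Q G}(F_1,M)\bigr)},
\end{equation*}
so it is a subquotient of $\Hom_{\Q G}(F_1,M)$.

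The key observation is then that since $F_1$ is a finitely generated free $\Q G$-module, $\Hom_{\Q G}(F_1, M) \cong M^{k}$ for some finite $k$, and this is a countable abelian group because $M$ is countable by hypothesis. Any subquotient of a countable abelian group is countable, so $H^1_\Q(G,M)$ is countable as desired. There is no real obstacle here beyond recording the standard fact that finite generation yields the $\FP_1$ condition over $\Q G$; the rest is direct.
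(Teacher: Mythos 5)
Your argument is correct and is essentially the same as the one the paper relies on: the paper simply cites the proof of \cite[Lemma 3.3]{emmanouil-12}, which rests on the same observation that finite generation of $G$ forces the $1$-cocycles to embed in a finite power $M^k$ of the countable module $M$ (there phrased via derivations determined by their values on a finite generating set, here via a partial free resolution with $F_1$ finitely generated). Nothing is missing.
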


\begin{proof} The proof is completely analogous to that of \cite[Lemma 3.3]{emmanouil-12}.
\end{proof}

\begin{theorem}
Let $G$ be a finitely generated group of $\hd_\Q G=1$. Then the following are equivalent:
\begin{enumerate}
\item\label{thrm-p1} $G$ is virtually free.
\item\label{thrm-p2}$G$ has a non-trivial finitely generated free group as a normal subgroup.
\end{enumerate}
\end{theorem}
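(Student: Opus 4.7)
The forward implication \ref{thrm-p1} $\implies$ \ref{thrm-p2} is routine. If $F \le G$ is a free subgroup of finite index, its normal core in $G$ is again free (as a subgroup of $F$) and finitely generated (being of finite index in the finitely generated group $G$); it is non-trivial since $\hd_\Q G = 1$ forces $G$ to be infinite.

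For the converse, let $N \normal G$ be a non-trivial finitely generated free normal subgroup and set $Q := G/N$, which is again finitely generated. The plan is to deduce that $\cd_\Q G = 1$ and then invoke \cite[Corollary 1.2]{dunwoody79}, which identifies finitely generated groups of rational cohomological dimension one with the virtually free groups. Because $G$ is finitely generated, $\Q$ fits into a short exact sequence $0 \to M \to P_0 \to \Q \to 0$ of $\Q G$-modules with $P_0$ finitely generated projective. A dimension shift together with the hypothesis $\hd_\Q G = 1$ shows that $M$ is flat, and $M$ is countably generated because $\Q G$ is a countable ring. Theorem \ref{main-et} applied to $R = \Q G$ then says that $M$ is projective if and only if $\Ext^1_{\Q G}(M, \Q G) \cong H^2(G, \Q G)$ is countable, reducing everything to the countability of $H^2(G, \Q G)$.

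This is the main step, and will be carried out with the Lyndon--Hochschild--Serre spectral sequence
$$E_2^{p,q} = H^p\bigl(Q, H^q(N, \Q G)\bigr) \Longrightarrow H^{p+q}(G, \Q G)$$
associated to the extension $1 \to N \to G \to Q \to 1$. The rows with $q \ge 2$ vanish because $N$ is free, and the row $q = 0$ also vanishes: as $N$ is infinite, it acts on $G$ by left translation with infinite orbits, so $(\Q G)^N = 0$. Only the row $q=1$ survives, so all higher differentials are forced to be zero and the spectral sequence collapses to $H^2(G, \Q G) \cong H^1\bigl(Q, H^1(N, \Q G)\bigr)$. Now $H^1(N, \Q G)$ is a countable abelian group, since $N$ is finitely generated free and $\Q G$ is countable (the relevant cocycle group sits inside a finite cartesian power of $\Q G$). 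Applying Lemma \ref{countable-q-lem} to the finitely generated group $Q$ with coefficients in the countable $\Q Q$-module $H^1(N, \Q G)$ then yields the countability of $H^1\bigl(Q, H^1(N, \Q G)\bigr)$, and hence of $H^2(G, \Q G)$.

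Putting the two strands together, $M$ is projective, so $\cd_\Q G \le 1$; since $G$ is infinite this gives $\cd_\Q G = 1$, and Dunwoody's theorem finishes the argument. The principal obstacle is expressing $H^2(G, \Q G)$ in a form to which Lemma \ref{countable-q-lem} applies, which is exactly what the double vanishing of the $q=0$ and $q \ge 2$ rows in the spectral sequence achieves.
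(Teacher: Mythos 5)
Your proof is correct and takes essentially the same approach as the paper: reduce to the countability of $H^2_\Q(G,\Q G)$ via a dimension shift, Theorem~\ref{main-et} and Dunwoody's theorem, then collapse the Lyndon--Hochschild--Serre spectral sequence to $H^2_\Q(G,\Q G)\cong H^1_\Q(G/N,H^1_\Q(N,\Q G))$ and apply Lemma~\ref{countable-q-lem}. The only cosmetic difference is your direct infinite-orbit argument for $H^0_\Q(N,\Q G)=0$, where the paper instead notes that $N$ is of type $\FP_\infty$ over $\Q$ and reduces to the vanishing of $H^0_\Q(N,\Q N)$.
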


\begin{proof} \ref{thrm-p1} $ \implies$ \ref{thrm-p2}: It is a well known fact that if a group is virtually of property~$\mathcal P$, where $\mathcal P$ is a property that is closed under taking finite index subgroups, then there is a normal subgroup $N$ of finite index possessing this property $\mathcal P$.

\smallskip\noindent \ref{thrm-p2} $\implies$ \ref{thrm-p1}: It suffices to show that $H^2_\Q(G, \Q G)$ is a countable abelian group. An easy dimension shift applying Theorem \ref{main-et} together with Duwoody's result \cite{dunwoody79} then yields the claim.

Consider the Lyndon--Hochschild--Serre spectral sequence: 
$$H_\Q^p(G/N, H_\Q^q(N, \Q G)) \rightarrow H_\Q^{p+q}(G, \Q G).$$

Since $N$ is free, we have that $H^q(N, \Q G)=0$ for all $q\geq 2$. We also claim that $H^0_\Q(N, \Q G)=0$: Since $N$ is finitely generated free, it is of type $\FP_\infty$ over~$\Q$, and hence it suffices to show that $H^0_\Q(N, \Q N)=0$, which follows from \cite[Proposition 13.2.11.]{ross08}.

 This now implies that
$$H_\Q^2(G, \Q G)= H^1(G/N,H^1(N, \Q G)).$$
The result now follows from Lemma \ref{countable-q-lem}.
\end{proof}

\bibliographystyle{alpha}
\bibliography{bibdata}
\end{document}